\newtheorem{defi}{Definition}[section]
\newtheorem{coro}[defi]{Corollary}
\newtheorem{theo}[defi]{Theorem}
\newenvironment{bmcformat}{\baselineskip20pt\sloppy\setboolean{publ}{false}}{\baselineskip20pt\sloppy}
\begin{document}
\begin{bmcformat}

\title{Weighted Kolmogorov Smirnov testing: an alternative for Gene
  Set Enrichment Analysis}

\author{Konstantina Charmpi$^{1,2,3}$%
         \email{Konstantina Charmpi - Konstantina.Charmpi@imag.fr}
       \and 
         Bernard Ycart\correspondingauthor$^{1,2,3}$%
         \email{Bernard Ycart\correspondingauthor - Bernard.Ycart@imag.fr}%
       }


\address{%
\iid(1) Universit\'e Grenoble Alpes, France\\
\iid(2) Laboratoire Jean Kuntzmann, CNRS UMR5224, Grenoble, France\\
\iid(3) Laboratoire d'Excellence TOUCAN, France%
}%

\maketitle

\begin{abstract}
Gene Set Enrichment Analysis (GSEA) is a basic tool for genomic data
treatment. From a statistical point of view, the centering of its
test statistic does not 
allow the derivation of asymptotic results.
A test statistic with a different centering is proposed.
Under the null hypothesis, the convergence in distribution of 
the new test statistic is proved, using the theory of empirical
processes. The limiting distribution can be computed by Monte-Carlo
simulation. The test defined in this way has been called Weighted
Kolmogorov Smirnov (WKS) test. The fact that the evaluation of the
asymptotic distribution serves for many different gene sets
results in shorter computing times. Using
expression data from the GEO repository, tested against the MSig
Database C2, a comparison between the classical GSEA test and the new
procedure has been conducted. Our conclusion is that, beyond its
mathematical and algorithmic advantages, the WKS test could be more
informative in many cases, than the classical GSEA test.
\vskip 2mm \noindent
\emph{Keywords:} GSEA, statistical test,
empirical processes, weak convergence, Monte-Carlo simulation

\vskip 2mm \noindent
\emph{AMS Subject Classification:} Primary 62F03; Secondary 60F17

\end{abstract}

\section{Introduction}
Since its definition by \cite{Subramanianetal05},
Gene Set Enrichment Analysis (GSEA) has been very successful, and it
may now be considered as 
the most basic tool of genomic data treatment: see
\cite{Bild05,Nametal08,Huangetal09} for reviews.
GSEA aims at
comparing a vector of numeric data indexed by the set of all genes, to
the genes contained in a given smaller gene set. 
The numeric data are typically obtained from a microarray
experiment. They may consist in expression levels, p-values,  
correlations, fold-changes, t-statistics, signal-to-noise ratios, etc.
The number associated to any given gene will be referred to as its
\emph{weight}. Many examples of such data
can be downloaded from the Gene Expression Omnibus (GEO) repository
(\cite{Edgar02}). The gene set may contain 
genes known to be associated to a given biological process, 
a cellular component, a type of cancer, etc. 
Thematic lists of such gene sets are given in the Molecular Signature (MSig)
database (\cite{Subramanianetal05}). The question to be answered is:
are the weights inside the gene set significantly high or
low, compared to weights in a random gene set of the same size?

Denote by $N$ the
total number of genes ($N\simeq 20\,000$ for the human genome). 
It will be convenient to identify the genes to $N$
regularly spaced points 
on the interval $[0,1]$, and their weights to the values of a
positive valued function $g$, defined on $[0,1]$: gene number $i$
corresponds to point $i/N$, and its weight $w_i$ to $g(i/N)$. In
\cite{Subramanianetal05}, the numbering of the genes is chosen so that weights are ranked 
in decreasing order. Thus, the weights usually appear to vary smoothly between
contiguous genes, and the function $g$ can be assumed to be continuous.

The gene set is included in 
the set of all genes. Let $n$ be its size.
In practice, $n$ ranges from a few tens to a few hundreds: $n$ is 
much smaller than $N$. With the identification above, it is considered
as a subset of size $n$ of the interval $[0,1]$, say $\{U_1,\ldots,U_n\}$. 
If there is no particular relation
between the weights and the gene set (null hypothesis), then the gene
set must be considered as a uniform random
sample without replacement of the set of all genes. The fact that the
gene set size $n$ is much smaller than $N$, justifies identifying the
distribution of 
a uniform $n$-sample without replacement of $\{1/N,\ldots,N/N\}$ to
that of a $n$-sample of points, 
uniformly distributed on $[0,1]$. Therefore, the
null hypothesis is:  
\begin{center}
H0: The gene set is a $n$-tuple $(U_1,\ldots,U_n)$ of independent,
identically distributed (i.i.d.) random variables, uniformly distributed on the
interval $[0,1]$.  
\end{center}
The basic object is the following step function, cumulating the
proportion of weights
inside the gene set, along the interval $[0,1]$. It is defined
for all $t$ between $0$ and $1$ by:
\begin{equation}
\label{ECSP}
S_n(t) = \frac{\sum_{k=1}^n g(U_k)\,\mathbb{I}_{U_k \leqslant t}}
{\sum_{k=1}^n g(U_k)}
\;,
\end{equation}
where $\mathbb{I}$ denotes the indicator of an event.
The test statistic proposed by \cite{Subramanianetal05} is:
\begin{equation}
\label{TSSub}
T_{n}=\sup_{t\in [0,1]} \left|\,S_n(t)-t\,\right|\;.
\end{equation}
The motivation is best understood in the particular case where the
weights $w_i$ are constant. Then the function $g$ is also constant, and:
$$
S_{n}(t) = 
\sum_{k=1}^n \frac{1}{n}\,\mathbb{I}_{U_k\leqslant t}\;.
$$
This is
the empirical Cumulative Distribution Function (CDF) of the 
sample $(U_1,\ldots,U_n)$. The test statistic $T_{n}$ is the maximal
distance between that empirical CDF and the theoretical CDF of the
uniform distribution on the interval $[0,1]$. In other terms,
$\sqrt{n}T_{n}$ is the Kolmogorov Smirnov (KS) test statistic for
the goodness-of-fit of the uniform distribution on $[0,1]$ to 
the sample $(U_1,\ldots,U_n)$
(\cite{Arnold_Emerson_11}).
The constant weight case was initially proposed by 
\cite{Mootha03}, who explicitly referred to the KS
statistic (see also \cite[Supporting text, p.~5,6,11]{Subramanianetal05}, 
\cite{Ycartetal14}, and \cite{Tarca13}).
In the general case where the weights are not constant,
the distribution of the test statistic $T_n$ under the null hypothesis is
unknown. In the current implementations, it is approximated
by Monte-Carlo simulation on $1000$ random samples (\cite{Subramanianetal07}). 

Our first remark is that in the non constant case, the limit of
$S_n(t)$ as $n$ tends to infinity is not $t$, as (\ref{TSSub}) seems to
suggest, but instead:
$$
\lim_{n\to\infty} S_n(t)
=
\frac{\int_0^t g(u)\,\mathrm{d} u}
{\int_0^1 g(u)\,\mathrm{d} u}\;.
$$
Thus the GSEA test statistic $T_{n}$ is not appropriately
centered, unless the weights are constant. Instead, the following test
statistic should be used:
\begin{equation}
\label{TSWKS}
T^*_{n}=\sqrt{n}\sup_{t\in [0,1]} \left|\,S_n(t)-
\frac{\int_0^t g(u)\,\mathrm{d} u}
{\int_0^1 g(u)\,\mathrm{d} u}
\,\right|\;.
\end{equation}
The objective of this paper is to derive the asymptotic
distribution of $T^*_{n}$ under the null
hypothesis, then deduce from the mathematical 
result a practical testing procedure, 
and compare the outputs of that procedure to those of 
the classical GSEA test.

Our theoretical result is the following.
\begin{theo}
\label{th:main}
Let $g$ be a continuous, positive function from $[0,1]$ into
$\mathbb{R}$. Denote by $G$ its primitive: $G(t) =
\int_0^t g(u)\,\mathrm{d}u$, and assume that $G(1)=1$. Let
$(U_n)_{n\in\mathbb{N}}$ be a sequence of i.i.d. random variables, uniformly
distributed on $[0,1]$. For all $n\geqslant 1$, and for all 
$t$ in $[0,1]$, consider the random variable $S_n(t)$ defined by
\textrm{(\ref{ECSP})}. Let
\begin{equation}
\label{defZn}
Z_n(t) = \sqrt{n}\left(S_n(t) - G(t)\right)\;.
\end{equation}
As $n$ tends to infinity, the stochastic
process $\{Z_n(t)\,,\;t\in[0,1]\}$
converges weakly in $\ell^{\infty}([0,1])$  
to the process 
$\{Z(t)\,,\;t\in[0,1]\}$, where:
\begin{equation}
\label{defZ}
Z(t) =
\int_0^tg(u)\,\mathrm{d}W_u-G(t)\int_0^1g(u)\,
\mathrm{d}W_u\;, 
\end{equation}
and $\{W_t\,,\;t\in [0,1]\}$ is the standard Brownian motion.
\end{theo}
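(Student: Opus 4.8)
The plan is to write $S_n$ as a ratio and linearize it. Set
\[
A_n(t)=\frac1n\sum_{k=1}^n g(U_k)\,\mathbb{I}_{U_k\leqslant t},
\qquad
B_n=\frac1n\sum_{k=1}^n g(U_k),
\]
so that $S_n(t)=A_n(t)/B_n$. Since $\mathbb{E}[g(U)\mathbb{I}_{U\leqslant t}]=G(t)$ and $\mathbb{E}[g(U)]=G(1)=1$, the strong law gives $B_n\to1$ almost surely. Introducing the centered, normalized process
\[
\alpha_n(t)=\sqrt n\,(A_n(t)-G(t))
=\frac1{\sqrt n}\sum_{k=1}^n\bigl(g(U_k)\,\mathbb{I}_{U_k\leqslant t}-G(t)\bigr),
\]
and noting that $A_n(1)=B_n$ while $G(1)=1$, one has the key relation $\sqrt n\,(B_n-1)=\alpha_n(1)$. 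A direct computation then yields the exact identity
\[
Z_n(t)=\sqrt n\Bigl(\frac{A_n(t)}{B_n}-G(t)\Bigr)
=\frac{\alpha_n(t)-G(t)\,\alpha_n(1)}{B_n}\;.
\]

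The heart of the argument is the weak convergence of $\alpha_n$ in $\ell^\infty([0,1])$. I would recognize $\alpha_n$ as the empirical process indexed by the class $\mathcal{F}=\{f_t:=g\,\mathbb{I}_{[0,t]}\,,\ t\in[0,1]\}$, that is, a normalized sum of the i.i.d.\ centered random functions $u\mapsto g(u)\mathbb{I}_{u\leqslant t}-G(t)$. Finite-dimensional convergence is immediate from the multivariate central limit theorem, $g$ being bounded on $[0,1]$; for $s\leqslant t$ the relevant covariance is $\int_0^{s}g^2-G(s)G(t)$, so the limit $\mathbb{G}$ is a centered Gaussian process with
\[
\mathrm{Cov}(\mathbb{G}(s),\mathbb{G}(t))=\int_0^{s\wedge t}g(u)^2\,\mathrm{d}u-G(s)G(t)\;.
\]
The step I expect to be the main obstacle is \emph{tightness}: one must control the oscillations of $\alpha_n$ uniformly in $t$. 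I would settle it by empirical process theory: since $\{\mathbb{I}_{[0,t]}:t\in[0,1]\}$ is a VC class and $g$ is a fixed bounded, hence square-integrable, envelope, the product class $\mathcal{F}$ is Donsker by the standard preservation results (polynomial bracketing numbers, or the VC-subgraph property). This yields the required asymptotic equicontinuity, and therefore $\alpha_n\Rightarrow\mathbb{G}$ in $\ell^\infty([0,1])$.

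To pass from $\alpha_n$ to $Z_n$, I would invoke Slutsky's lemma together with the continuous mapping theorem. Because $B_n\to1$ in probability, the pair $(\alpha_n,B_n)$ converges weakly to $(\mathbb{G},1)$ in $\ell^\infty([0,1])\times\mathbb{R}$; the map $(x,b)\mapsto\bigl(x(\cdot)-G(\cdot)\,x(1)\bigr)/b$ is continuous at every point $(x,1)$, so applying it to the exact identity above gives
\[
Z_n\ \Longrightarrow\ \mathbb{G}(\cdot)-G(\cdot)\,\mathbb{G}(1)
\qquad\text{in }\ell^\infty([0,1])\;.
\]

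It then remains to identify this limit with the process $Z$ of (\ref{defZ}). Writing $M(t)=\int_0^t g(u)\,\mathrm{d}W_u$, Itô's isometry gives $\mathrm{Cov}(M(s),M(t))=\int_0^{s\wedge t}g^2$, and a direct calculation shows that the centered Gaussian process $M(\cdot)-G(\cdot)M(1)$ has covariance
\[
\int_0^{s\wedge t}g^2-G(t)\!\int_0^{s}g^2-G(s)\!\int_0^{t}g^2+G(s)G(t)\!\int_0^1 g^2\;,
\]
which I would check coincides, using $G(1)=1$, with the covariance of $\mathbb{G}(\cdot)-G(\cdot)\mathbb{G}(1)$. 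Since both limits are centered Gaussian processes with the same covariance function, they have the same law, and the proof is complete.
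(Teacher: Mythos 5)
Your proposal is correct and follows essentially the same route as the paper: a Donsker-class argument for the numerator process, the law of large numbers plus Slutsky for the normalizing sum $B_n$, and identification of the Gaussian limit with $\int_0^t g\,\mathrm{d}W_u-G(t)\int_0^1 g\,\mathrm{d}W_u$ by matching covariances. The only (cosmetic) difference is bookkeeping: your numerator $\alpha_n(t)-G(t)\alpha_n(1)$ is algebraically identical to the paper's process $Z^1_n(t)=n^{-1/2}\sum_k g(U_k)(\mathbb{I}_{U_k\leqslant t}-G(t))$, which the paper handles by proving the centered class $\{g(\cdot)(\mathbb{I}_{[0,t]}(\cdot)-G(t))\}$ Donsker directly rather than applying the continuous linear map $x\mapsto x-G\,x(1)$ to the limit of $\alpha_n$.
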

The hypothesis $\int_0^1g(u)\,\mathrm{d}u=1$ induces no loss of
generality: since $g$ is continuous and positive, its integral is
positive; $g$ can be divided by its integral without changing the
values of the cumulated proportion of weights $S_n(t)$. 
The proof of Theorem \ref{th:main}
will be given in section \ref{proof}. It is based on the
theory of empirical processes, for which 
\cite{ShorackWellner86} and 
\cite{Kosorok_08} will be used as 
general references. 

The first consequence of Theorem \ref{th:main} for GSEA, is that as
$n$ increases, the distribution of the proposed test statistic
$T^*_{n}$ under the null hypothesis, tends to that of the following
random variable $T$:
$$
T = \sup_{t\in [0,1]} |Z(t)|\;,
$$
where the random process $Z$ is defined by
(\ref{defZ}). Denote by $F$ its CDF: for all $x>0$,
\begin{equation}
\label{defcdf}
F(x) = \mbox{Prob}(T\leqslant x)
\;.
\end{equation}
Observe that $F(x)$ only depends on $g$, i.e. on the weights of the
vector to be tested. Except in the classical KS case of constant
weights, $F$ does not have a closed-form expression, but a Monte-Carlo
approximation is easily obtained. The testing procedure generalizes
that of the classical KS test:
since the test statistic $T^*_{n}$ has asymptotic CDF $F$ 
under the null hypothesis, the p-value of an observation
$T^*_{n}=x$ is $1-F(x)$. That testing procedure will
be referred to as \emph{Weighted Kolmorov Smirnov} (WKS) test. A crucial
feature is that, since $F$ only depends on the
weights, the same evaluation of $F$ can be repeatedly used for many
gene sets, which saves computing time. 
Of course, the repeated application of a test to a full database
of several thousand gene sets poses the problem of False Discovery
Rate (FDR) correction. In applications, we have used the
method  of \cite{Benjamini_Yekutieli01}: see \cite{DutoitvanderLaan07}
for multiple testing procedures in genomics.

Like the KS test, the WKS test is based on an asymptotic result. 
In practice, it is used for finite values of $n$. 
Therefore, it is necessary to determine for which size $n$ of
gene sets, the test can be applied with good precision. 
A Monte-Carlo comparison of the cumulative distribution function 
of $T^*_{n}$ to its limit $F$ for different values of $n$ was conducted. 
Our conclusion is that the test can be safely applied for 
gene set sizes $n$ larger than $40$.
Beyond Monte-Carlo validation, it was necessary to compare 
the outputs of the WKS test to those of the
classical GSEA test, on real data.
Inside the GEO dataset GSE36133 of \cite{Barretinaetal12}, we have
selected vectors (samples) from different types of tumors. 
These vectors were tested 
against all gene sets of MSig database C2, calculating for each sample the
p-values of both tests. The gene sets known to
be related to the same type of cancer as 
the initial vector were of particular interest. 
An example corresponding to a sample of liver tumor will be reported;
we consider it as typical of the observations that were made with
other samples.
The obtained results are encouraging: the WKS test tends to output
less significant gene sets than the classical GSEA test out of the whole
database, but more out of those gene sets related to the correct type
of cancer. Our conclusion is that, beyond its mathematical and
algorithmic advantages, the WKS test could be more informative in many
cases, than the classical GSEA test.

The document is organized in the following way. In section
\ref{proof}, Theorem \ref{th:main} is proved, and 
the asymptotic distribution of $T^*_{n}$ is deduced.
Section \ref{stats} is devoted to the statistical application,
beginning with the description of the Monte-Carlo algorithm of 
calculation of p-values. Results of simulated tests are
reported next. Finally, an example of 
comparison of the WKS test with the GSEA test on
real data is discussed.

\section{Theoretical background}
\label{proof}
The notations and results of
\cite{Kosorok_08} will be used. 
In particular, throughout the section, $\rightsquigarrow$ denotes
the weak convergence of processes in $\ell^{\infty}([0,1])$.
We first give the proof of Theorem \ref{th:main}, which asserts the
convergence 
$Z_n\rightsquigarrow Z$, where $Z_n$ is the empirical process defined
by (\ref{defZn}), and $Z$ is the Gaussian bridge defined by (\ref{defZ}).  
\begin{proof}
The idea is the following. Consider:
\begin{equation}
\label{Z1n}
Z^1_n(t) = \frac{\sum_{k=1}^n g(U_k)}{n}\,Z_n(t)\;. 
\end{equation}
Using the general results on empirical processes and Donsker classes,
 exposed in section 9.4
of \cite{Kosorok_08}, it will be proved that
$Z^1_n\rightsquigarrow Z$.
By the law of large
numbers, 
$$
\lim_{n\to\infty} \frac{\sum_{k=}^n g(U_k)}{n} = \int_0^1
g(u)\,\mathrm{d}u
=1\;,\quad \mbox{a.s.}
$$ 
The convergence  $Z_n\rightsquigarrow Z$
follows as 
an application of Slutsky's theorem: Theorem 7.15 of
\cite[p.~112]{Kosorok_08}. 

The random variable  $Z^1_n(t)$ can be written as follows:
\begin{eqnarray*}
Z^1_n(t)&=&
\frac{1}{\sqrt{n}}\left(\sum_{k=1}^ng(U_k)\mathbb{I}_{\{U_k \leqslant t\}}
-G(t)\sum_{k=1}^ng(U_k)\right)\\[2ex]
&=& 
\frac{1}{\sqrt{n}}
\left(\sum_{k=1}^ng(U_k)\left(\mathbb{I}_{\{U_k\leqslant t\}}
-G(t)\right)\right) \;,
\end{eqnarray*}
denoting by $G$ the primitive of $g$, as before.
Empirical processes are customarily written as function-indexed
processes. Define the class of functions $\mathcal{F}$ by:
$$
\mathcal{F}=
\left\{\,
g(\cdot)\left(\mathbb{I}_{[0,t]}(\cdot)-G(t)\right)
\,;\; t\in [0,1]\,\right\}\;.
$$
Denote by $\mathbb{P}_n$ the empirical measure of $(U_1,\ldots,U_n)$,
by $\mathbb{P}$ the uniform distribution on $[0,1]$, by $\mathbb{P}_n
f$ and 
$\mathbb{P}f $ the integrals of $f$ with respect to 
$\mathbb{P}_n$ and $\mathbb{P}$
\cite[p.~11]{Kosorok_08}. For $f\in
\mathcal{F}$, define $\tilde{Z}^1_n(f)$ by:
\begin{equation}
\tilde{Z}^1_n(f) = \sqrt{n}\left(\mathbb{P}_nf-Pf\right)\;.
\label{indexing_by_functions}
\end{equation}
Obviously, for all $t\in [0,1]$, 
\begin{equation}
\label{ZtildeZ}
Z^1_n(t)=\tilde{Z}^1_n\left(g(\cdot)\left(
\mathbb{I}_{[0,t]}(\cdot)-G(t)\right)\right) \;.
\end{equation}
Let us prove that $\mathcal{F}$ is a Donsker class.
Firstly, observe that the following class $\mathcal{F}_1$ is Donsker. 
$$
\mathcal{F}_1=\left\{\,\mathbb{I}_{[0,t]}(\cdot)-G(t)\,;\; t\in [0,1]\,\right\}\;.
$$
Indeed, for $f\in \mathcal{F}_1$, the process
$\sqrt{n}\left(\mathbb{P}_nf-\mathbb{P}f\right)$ converges
weakly to the standard Brownian bridge.
Since all functions in $\mathcal{F}_1$ take
values between $-1$ and $1$, the supremum of $|\mathbb{P}f|$ over 
$\mathcal{F}_1$ is not larger than $1$.
The function $g$, being continuous on a compact interval, is
bounded and measurable.
From Corollary 9.32, p.~173 of \cite{Kosorok_08}, it follows that
$\mathcal{F}$ is also Donsker.
The convergence of $\tilde{Z}^1_n$ now follows from the result of
\cite[p.~11]{Kosorok_08}. The limit $\tilde{Z}^1$ is a zero mean,
$\mathcal{F}$-indexed, Gaussian process. Its covariance function is
defined, for all $f_1, f_2$ in $\mathcal{F}$ by:
\begin{equation}
\label{covtildeZ}
\mathbb{E}[ \tilde{Z}^1(f_1)\,\tilde{Z}^1(f_2)]
=
\mathbb{P}(f_1f_2)-\mathbb{P}f_1\,\mathbb{P}f_2 \;.
\end{equation}
Through (\ref{ZtildeZ}), the convergence of $\tilde{Z}^1_n$ induces
the convergence of $Z^1_n$, to a zero mean, $[0,1]$-indexed
process $Z^1$. Let us compute the covariance function of $Z^1$.
For $s,t$ in $[0,1]$, let:
$$
f_1(\cdot) = 
g(\cdot)(\mathbb{I}_{[0,s]}(\cdot)-G(s))
\quad\mbox{and}\quad
f_2(\cdot) = 
g(\cdot)(\mathbb{I}_{[0,t]}(\cdot)-G(t))\;.
$$
Applying (\ref{covtildeZ}) to these functions $f_1$ and $f_2$ yields,
\begin{equation}
\label{covZ}
\begin{array}{lcl}
\mathbb{E}[ Z^1(t)\,Z^1(s)]
&=&\displaystyle{
\int_0^{\min(t,s)}g^2(u)\,\mathrm{d}u-
G(t)\,\int_0^sg^2(u)\,\mathrm{d}u} \\[2ex]
&&\displaystyle{
-G(s)\,\int_0^tg^2(u)\,\mathrm{d}u
+G(s)G(t)\,\int_0^1g^2(u)\,\mathrm{d}u\;.}
\end{array}
\end{equation}
There remains to be proved that $Z^1$ and $Z$ have the same
distribution, where $Z$ is defined by the representation
(\ref{defZ}) in terms of the standard Brownian motion $W$:
$$
Z(t) = \int_0^tg(u)\,\mathrm{d}W_u -
G(t)\,\int_0^1g(u)\,\mathrm{d}W_u \;.
$$
It is a well known fact that the primitive of a deterministic function
with respect  to the Brownian motion is Gaussian: therefore $Z$ is a
Gaussian process.
The covariance function is easily calculated, using formula (32), p.~128 
of \cite{ShorackWellner86}: it
is indeed defined by (\ref{covZ}). The processes $Z^1$ and
$Z$ are both Gaussian, their means and covariance are equal,
therefore they have the same distribution.
\end{proof}

As explained in the introduction, the random variable of interest for
GSEA is the supremum of the process $|Z|$ over the
interval $[0,1]$. 
\begin{coro}
\label{co:sup}
Under the notations and hypotheses of Theorem \ref{th:main},
let 
$$
T^*_n=\sup_{t\in [0,1]}\left|\,Z_n(t)\,\right|\;.
$$ 
Then $T^*_n$
converges in distribution to
$$
\sup_{t\in [0,1]} \left|Z(t)\right|
=\sup_{t\in [0,1]}
\left|\,
\int_0^tg(u)\mathrm{d}W_u-
\int_0^tg(u)\mathrm{d}u\int_0^1g(u)\mathrm{d}W_u\,\right|\;,
$$ 
where $W$ denotes the standard Brownian motion.
\end{coro}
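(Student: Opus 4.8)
The plan is to deduce the corollary from Theorem \ref{th:main} by a single application of the continuous mapping theorem for weak convergence in $\ell^{\infty}([0,1])$. The essential observation is that the quantity of interest, $T^*_n=\sup_{t\in[0,1]}|Z_n(t)|$, is obtained from the process $Z_n$ by applying the supremum-norm functional, and this functional is continuous; the weak convergence $Z_n\rightsquigarrow Z$ established in the theorem can therefore be transported through it.

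First I would introduce the map $\phi\colon\ell^{\infty}([0,1])\to\mathbb{R}$ defined by $\phi(x)=\sup_{t\in[0,1]}|x(t)|$, which is precisely the norm of the space $\ell^{\infty}([0,1])$, so that $T^*_n=\phi(Z_n)$ by definition. Next I would verify that $\phi$ is continuous. This follows at once from the reverse triangle inequality: for any $x,y\in\ell^{\infty}([0,1])$,
$$
\left|\,\phi(x)-\phi(y)\,\right|\leqslant \sup_{t\in[0,1]}\left|\,x(t)-y(t)\,\right|\;,
$$
so that $\phi$ is $1$-Lipschitz, hence continuous at every point of $\ell^{\infty}([0,1])$.

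With continuity in hand, the core step is to invoke the continuous mapping theorem in the form valid for weak convergence of processes in $\ell^{\infty}([0,1])$, as exposed in \cite{Kosorok_08}. Since Theorem \ref{th:main} gives $Z_n\rightsquigarrow Z$ and $\phi$ is continuous, it follows that $\phi(Z_n)\rightsquigarrow\phi(Z)$, that is, $\sup_{t\in[0,1]}|Z_n(t)|$ converges weakly to $\sup_{t\in[0,1]}|Z(t)|$. As both sides are real-valued random variables, this weak convergence is exactly convergence in distribution, which is the assertion of the corollary. The explicit formula for the limit is then obtained simply by substituting the representation (\ref{defZ}) of $Z$ into $\sup_{t\in[0,1]}|Z(t)|$.

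The main obstacle here is negligible: the only genuine point to check is the continuity of $\phi$, which the Lipschitz bound above settles immediately. The one subtlety worth flagging is that $\ell^{\infty}([0,1])$ is not separable, so the weak convergence in Theorem \ref{th:main} must be understood in the sense adopted by \cite{Kosorok_08}; the continuous mapping theorem nevertheless remains valid in that setting, and the measurability requirements have already been arranged in the proof of the theorem, so no further work is needed.
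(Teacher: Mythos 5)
Your argument is correct and follows essentially the same route as the paper: the paper likewise observes that $f\mapsto \sup_{t\in[0,1]}|f(t)|$ is continuous on $\ell^{\infty}([0,1])$ and applies the continuous mapping theorem (Theorem 7.7 of Kosorok) to the weak convergence $Z_n\rightsquigarrow Z$ from Theorem \ref{th:main}. Your additional verification of the Lipschitz bound and the remark on non-separability are fine but not points of divergence.
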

\begin{proof}
The mapping $f\mapsto \sup_{t\in
  [0,1]}|f(t)|$, from $l^\infty([0,1])$ into $\mathbb{R}^+$,  
is continuous. From Theorem \ref{th:main},
$Z_n\rightsquigarrow Z$. The conclusion
follows as an application of Theorem 7.7, p.~109 of \cite{Kosorok_08}.
\end{proof}

\section{Statistical Application}
\label{stats}
\subsection{Implementation}
The R code (\cite{R_software}) implementing the WKS test has been made
available online, together with a user manual and samples of data.
Several issues regarding the implementation
are discussed here. The essential step is the evaluation of
 the cumulative distribution function distribution $F$ defined by
 (\ref{defcdf}), or else:
\begin{equation}
\label{lim:rv}
F(x)=\mbox{Prob}\left(
\sup_{t\in
  [0,1]}\left|\,\int_0^tg(u)\,\mathrm{d}W_u
-G(t)\,\int_0^1g(u)\,\mathrm{d}W_u\,\right|\leqslant
x\right) \;.
\end{equation}
A Monte-Carlo calculation has to be used. First of all,
sample paths for the stochastic process 
$$
\left\{\,\int_0^tg(u)\,\mathrm{d}W_u\,;\; t\in [0,1]\,\right\}
$$ 
must be simulated. This is done using a standard Euler-Maruyama
scheme: see
\cite{Sauer13} for a review of numerical methods for stochastic
integrals and differential equations. 
A regular subdivision of the interval $[0,1]$ into $m$
intervals is chosen:
$$
t_i = \frac{i}{m}\,,\; i=0,\ldots,m\;.
$$
Recall that in practice, the function $g$ is known at points
$i/N$ representing the genes. Hence it is natural to
choose $m=N$. 
The stochastic integral is approximated by a sum:
\begin{equation}
\int_0^tg(u)\,\mathrm{d}W_u
 \approx \sum_{i=0}^{m-1}g(t_i)\,(W_{t_{i+1}\wedge t}-W_{t_i\wedge t}\,)\;.
\label{stochastic_integral:approximation}
\end{equation}
The increments $W_{t_{i+1}}-W_{t_i}$ are easily simulated as i.i.d centered Gaussian
variables, with variance $1/m$.
An estimate of the CDF $F$ is obtained by simulating $nsim$ 
discretized trajectories of $Z$, taking the
maximum of the absolute value of each, then returning the empirical
CDF of the obtained sample. The algorithm can be written as follows.
   
\begin{algorithm}
\caption{Approximation of $F$}
\label{algo:limdistr}
\begin{algorithmic}[1]
\STATE Simulate increments of the Brownian motion on $t_0,\ldots,t_m$,
\STATE for $i=0,\ldots,m-1$, compute
$
g(t_i)\,(W_{t_{i+1}}-W_{t_i})
$,
\STATE get cumulated sums of the previous sequence,
\STATE deduce the discretized trajectory for 
$\{Z(t)\,,\; t\in [0,1]\}$ at $t_0,\ldots,t_m$,
\STATE compute the maximum absolute value of the previous sequence,
\STATE repeat $nsim$ times steps 1 to 5,
\STATE return the empirical distribution function of the obtained sample.
\end{algorithmic}
\end{algorithm}

Actually, since $F(x)$ is evaluated as the proportion of a sample
below $x$, the result must take the uncertainty into account. We
propose to return the lower bound of the 95\% left-sided
confidence interval,
instead of the point estimate. This gives an upper bound for the
p-value, which is a conservative evaluation. 
As stated before, the CDF $F$ only depends on the weight function
$g$. The relation between $g$ and $F$ is illustrated
on Figure \ref{fig:limdistrs}. Five different CDF's have been
computed, for  $g_k(x)=(k+1)(1-x^{1/k})$, $k=0,1,2,3,4$. Denote them
by $F_0,\ldots,F_4$. The case
$k=0$ is that of constant weights, and can be used as a validation for
the algorithm above: $F_0$ is the Kolmogorov Smirnov CDF, 
which has an explicit expression. It can be checked that the estimate 
output by Algorithm
\ref{algo:limdistr} is close to the known exact function. 
The curves of Figure \ref{fig:limdistrs} were obtained 
via $20\,000$ Monte-Carlo simulations,
over $15\,000$ discretization points.
It turns out
that for all $x$, $F_0(x)>\cdots>F_4(x)$: the steeper $g$, the smaller
$F$, and the larger the p-values. The differences between the curves
are sizable: calculating $\sup|F_k-F_0|$ for $k=1,\ldots,4$ gives  
$0.199$, $0.271$, $0.324$, $0.356$. 
\begin{figure}[!ht]
\centerline{
\includegraphics[width=9cm]{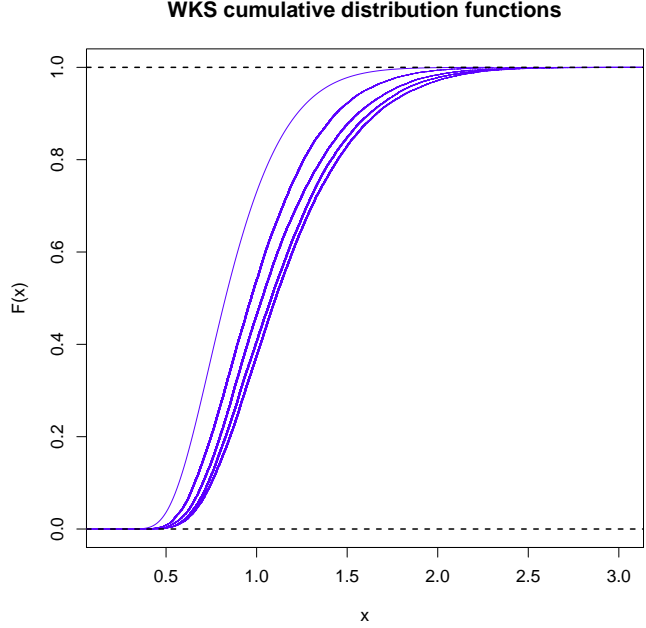}
}
\caption{Cumulated distribution functions $F_k$ corresponding to 
$g_k(x)=(k+1)(1-x^{1/k})$, for $k=0,1,2,3,4$. The highest curve 
corresponds to $k=0$ (constant weights, classical Kolmogorov Smirnov
CDF). The CDF's decrease as $k$ increases: the steeper $g$, the smaller
$F$, and the larger the p-values.
}
\label{fig:limdistrs}
\end{figure}
Theoretical functions $g$ may seem of little practical interest. This
is not so, for two reasons. The first reason is the use of robust
statistics (see \cite{Heritieretal09} as a general reference, and
\cite{Tsodikovetal02} for application to expression data). If the
initial values are replaced by their ranks, then the weights are
$N,N\!-\!1,\ldots,2,1$. Therefore, the weight function is
$g_1(x)=2(1-x)$.
This justifies calculating $F_1$ with good precision, which
makes the WKS test fast and precise, for all uses over
rank statistics. We
have done so, using $10^6$ Monte-Carlo simulations, and $10^5$
discretization points. The second reason is the observation of $F$
when the weights come from real data.
Eight different GEO datasets were considered:
GSE36382 (\cite{Mayerleetal13}), GSE48348 (\cite{EskoMetspalu13}), 
GSE36809 (\cite{Xiaoetal11}), 
GSE31312 (\cite{Freietal13}), GSE48762 (\cite{Obermoseretal13}), 
GSE37069 (\cite{Seoketal13}), GSE39582 (\cite{Marisaetal13}), and 
GSE9984 (\cite{Mikheevetal08}).
Several samples of expression levels in each study were selected. 
In each sample, the expression levels
were ranked in decreasing order, 
and Algorithm \ref{algo:limdistr} was applied
in order to obtain an estimation of $F$.
For all real datasets, the estimated $F$ was such that $F_4(x)<
F(x)<F_0(x)$. It seems to be the case in practice that $F_4$ and $F_0$ provide
lower and upper bounds for $F$.

The next algorithmic point concerns 
the calculation of the test
statistic, that is the value of $T^*_n$ defined by (\ref{TSWKS}) for a given 
set of weights and a gene set of size $n$:
$$
 T^*_{n}=\sqrt{n}\sup_{t\in [0,1]} \left|\,S_n(t)-
G(t)
\,\right|\;,
$$
where 
$$
S_n(t) = \frac{\sum_{k=1}^n g(U_k)\,\mathbb{I}_{U_k \leqslant t}}
{\sum_{k=1}^n g(U_k)}\;.
$$
The values $g(U_k)$ are the weights of genes inside the gene set.
Observe that, if the same vector has to be tested against many gene
sets, the calculation of $G(t)$ (cumulated sums of all weights) must be
done only once. 
The value of $T^*_n$ is returned by  
a procedure similar to that of the classical KS test. Consider two 
non-decreasing functions $f$ and $h$ where $f$ is a step function 
with jumps on the set $\{x_1,\ldots,x_n\}$ and $h$ is continuous.
The supremum of the difference between $f$ and $h$ is computed as follows 
\cite[p.~35]{Arnold_Emerson_11}.
$$
\sup_x|\,f(x)-h(x)\,|=
\max_i\{\,\max\{\,|\,h(x_i)-f(x_i)\,|,|\,h(x_i)-f(x_{i-1})\,|\,\}\,\}\;.
$$
\subsection{Validation of asymptotics on simulated data}
\label{stats:sim_data}
Since the WKS test relies on a convergence theorem, it is necessary to
determine the values of $n$ (the gene set size) for which the
procedure yields precise enough results. Such a validation is
standard. For a given $n$, a sample of gene sets of size $n$ is simulated, under
the null hypothesis. For each of them, the test statistic is computed,
thus a sample of values of the test statistic under the null
hypothesis is obtained. The
goodness-of-fit of the theoretical CDF $F$ to the empirical CDF of the
sample is tested by the (classical) KS test.
Figure \ref{fig:KSdist} shows results that
were obtained for
two functions $g$: one is $g_1(x)=2(1-x)$ (left panel), the other one
comes from real data: a sample in GSE36133  
of \cite{Barretinaetal12} (right panel). 
The evaluation of $F_1$ was done
over $10^6$ Monte-Carlo simulations, and $10^5$ discretization points,
as explained in the previous section. For the real data, the number of
discretization points was $m=N=18\,638$, and the number of
Monte-Carlo simulation was $nsim=20\,000$.
The values
of $n$ range from $5$ to $1\,100$ by step $5$. For each $n$,
$1\,500$ 
uniform random gene sets of size $n$ were simulated. The negative logarithm in
base $10$ of the KS p-value is plotted. On each plot the horizontal line
corresponding to a 5\% p-value has been added. The
p-values are small until $n=40$, they stay above 5\% after. This
is coherent with what is observed for most asymptotic tests, and
in particular the classical KS test. 
Beyond statistical validation, the comparison of the exact CDF,
estimated over random gene sets, with the theoretical asymptotic $F$
reveals an interesting feature of the WKS test: the exact CDF tends to
be smaller than $F$. This implies that the asymptotic p-value tends to
be larger than the true one, or else that the procedure is
conservative: small gene sets are less likely to be declared
significant by WKS.

\begin{figure}[!ht]
\centerline{
\begin{tabular}{cc}
\includegraphics[width=8cm]{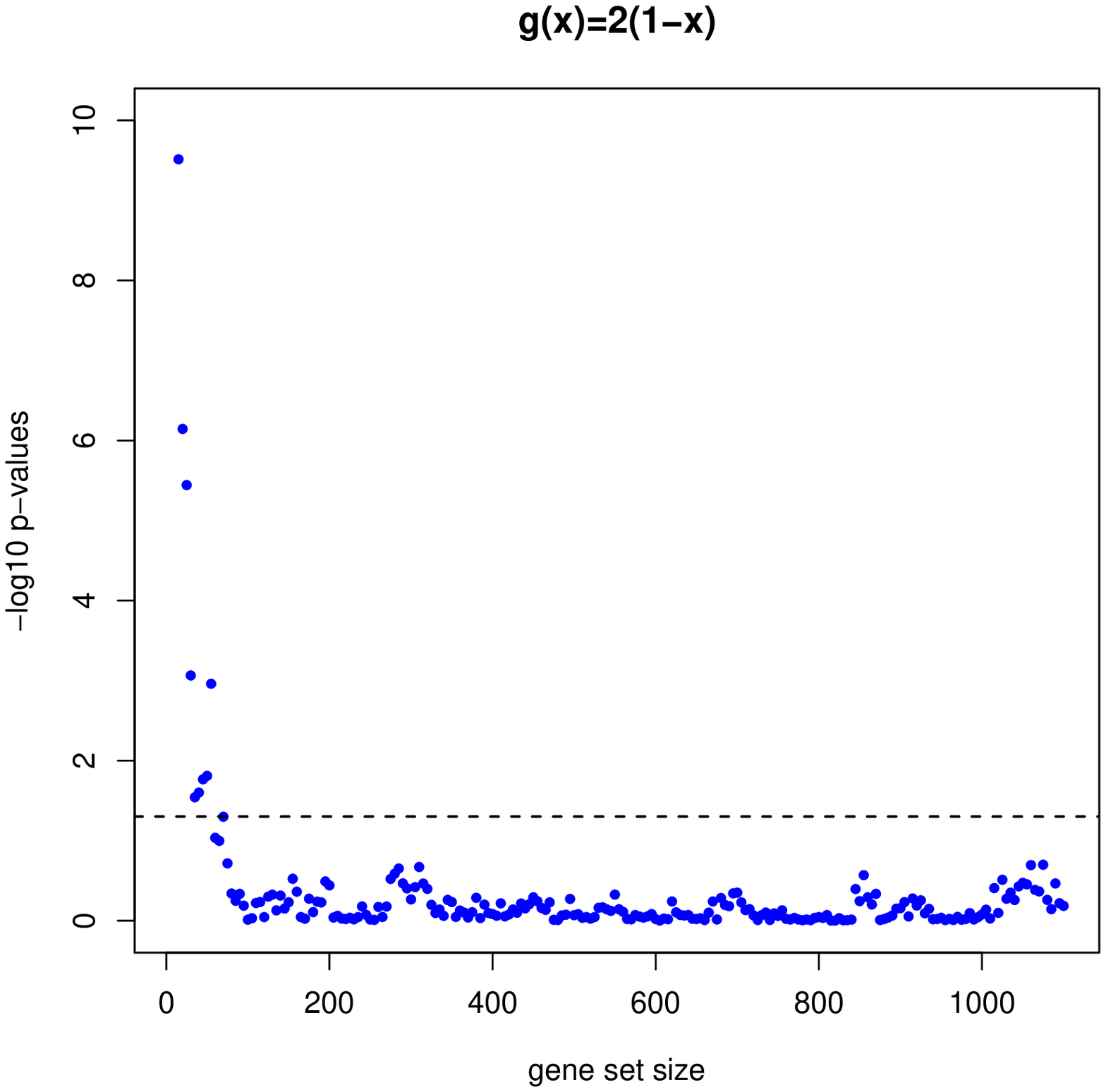}
&
\includegraphics[width=8cm]{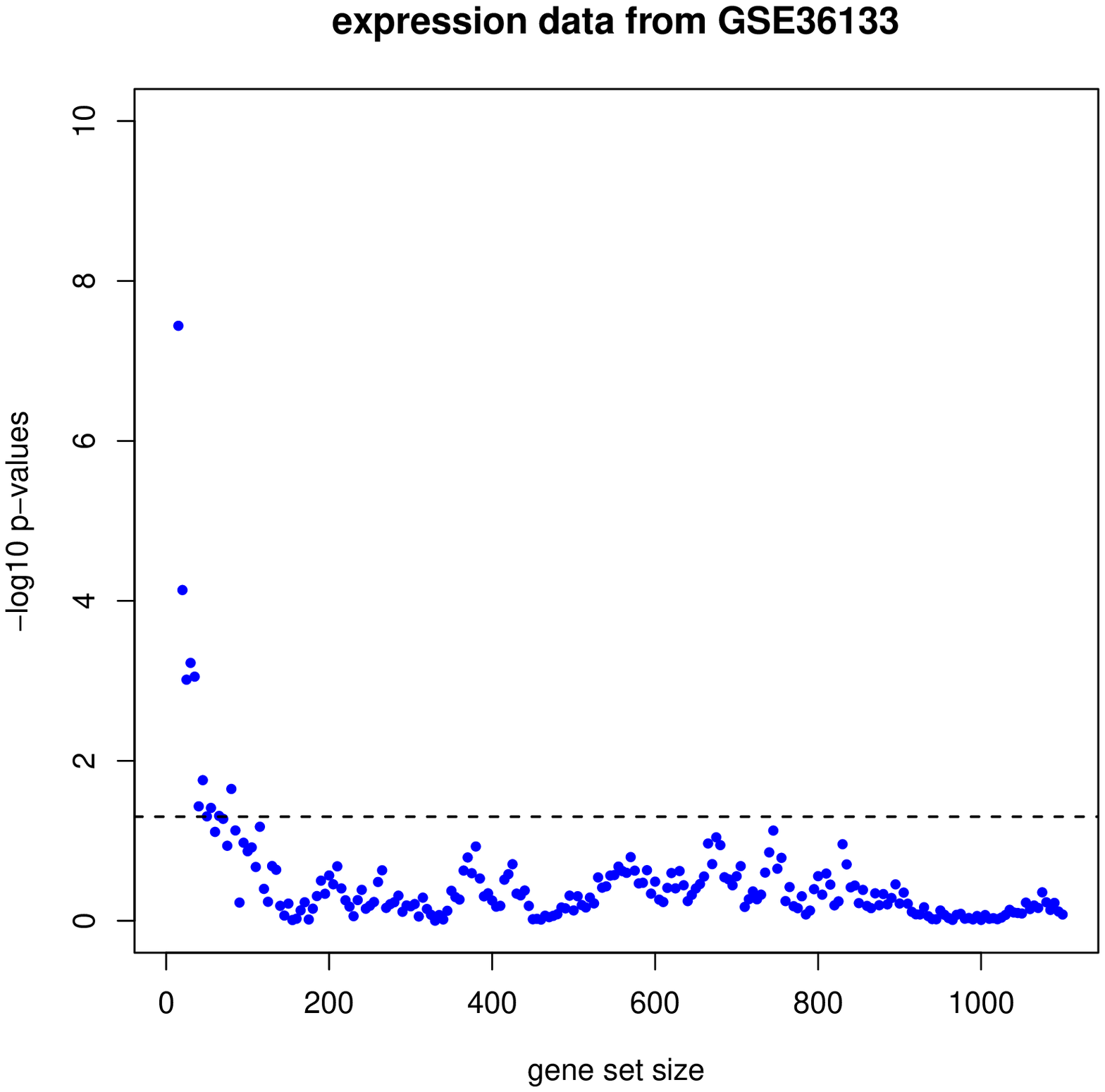}
\end{tabular}
}
\caption{Goodness-of-fit of simulated WKS test statistic $T^*_n$ over
  simulated gene sets. The function $g$ is $g(x)=2(1-x)$ on the left
  panel. It comes from real data on the right panel. The gene set size
  (abscissa) ranges from $5$ to $1\,100$ by step $5$. For each $n$ the
  ordinate is the negative logarithm in base 10 of the KS
  goodness-of-fit 
p-value, over a sample of $1\,500$ gene sets. The dashed
lines have ordinate $-\log_{10}(0.05)$.}
\label{fig:KSdist}
\end{figure}

On Figure \ref{fig:KSdist}, there is no clear difference 
between the theoretical $g$ (left), and real data
(right). However, it must be recalled that the null hypothesis H0,
under which simulations have been done in both cases, is 
that the gene set is a sample of uniform random variables on the
interval $[0,1]$. However, in practice, 
the gene set should be considered instead as a
random subset without replacement of the set of all genes. If the
gene set size $n$ is small compared to the total number of genes
$N$, the difference is negligible. We have conducted another set of
experiments, where gene sets were simulated 
by extracting random samples without replacement from 
$\{1/N,\ldots,N/N\}$. The results (not reported here), show a good
agreement with those of Figure  \ref{fig:KSdist}, until
$n=1\,000$. Beyond that value, the asymptotics becomes less precise.
It must be observed that
gene sets of size larger than $1\,000$ are relatively rare ($28$ out of
the $4\,722$ gene sets of C2).
\subsection{Comparison with classical GSEA}
\label{stats:real_data}
In this section, only real data are considered. Several vectors coming
from the GEO repository were tested against all $4\,722$
gene sets  in the MSig C2 database, using the classical GSEA, and the WKS
tests. The vectors that were used came from GEO dataset GSE36133 of
\cite{Barretinaetal12}, annotated using the org.Hs.eg.db package of
\cite{Carlson12}. This gave $N=18\,638$ different gene names. Observe that
applying the tests, the gene sets are necessarily reduced to those
$N$ genes. Out of the $21\,047$ different gene symbols
present in C2, only  $16\,683$ were common with the $N$ genes of the
chosen vectors.

For a given vector, two sets of $4\,722$ 
p-values were obtained, one with the GSEA test, the other with the WKS
test. Results that can be considered as typical are
represented on Figure \ref{fig:pvalues_GSEA_wKS}.
In that case, the vector contained expression data from liver
tumor tissue. Out of the $4\,722$ gene sets of C2,
$129$ have ``liver'' in their title. They were considered
are related to liver cancer, and the corresponding points are
represented as red triangles on the figure.
The negative 
logarithms in base 10 of the p-values of both tests have been plotted,
thus the figure displays $4\,722$ points corresponding to p-value pairs.
For comparison sake, only raw p-values are considered,
without FDR adjustment.
A p-value of 5\% is marked by a dashed black line: points on
the right of the vertical line are significant for the
classical GSEA test, points above the horizontal line
are significant for the WKS test.
For the WKS test,
the CDF $F$ was calculated over 
$m=N=18\,638$ discretization points, and the number of
Monte-Carlo simulations was $nsim=10^5$.
For the classical GSEA test, the number of
Monte-Carlo simulation had to be limited to $10^4$. 
\begin{figure}[!ht]
\centerline{
\includegraphics[width=9cm]{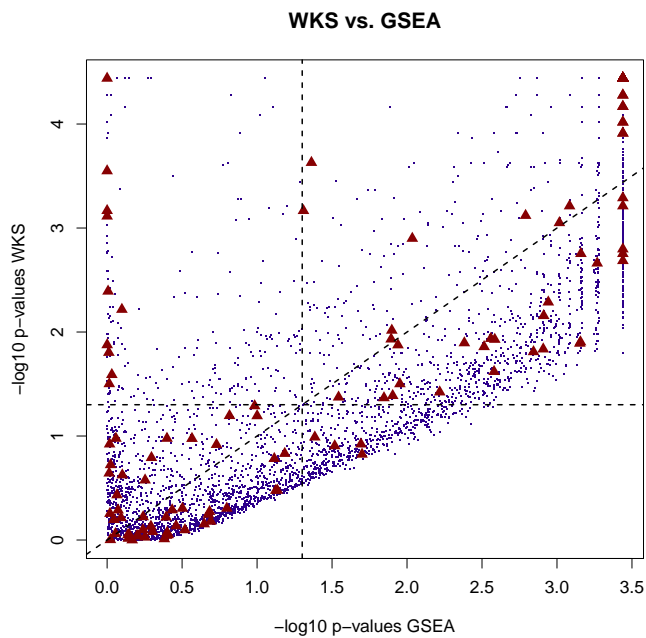}
}
\caption{Test of a liver tumor expression vector against the $4\,722$
  gene sets of the MSig C2 database. Each point corresponds to a gene
  set, the coordinates being the negative logarithm in base 10 of the p-values,
  for the classical GSEA and the WKS tests. Gene sets related to liver
  cancer in the database are represented as red triangles. The
  horizontal and vertical dashed lines
  correspond to 5\% p-values.}
\label{fig:pvalues_GSEA_wKS}
\end{figure}

The vertical dotted lines appearing on the right of the graphic are artefacts,
due to the Monte-Carlo method for the GSEA test: the rightmost line
corresponds to cases where the point-estimated p-value is equal to
$0$. Apart from these artefacts, it must be observed that the
results of both tests are globally coherent: 
 $2\,501$ database gene sets were significant (p-value smaller than
 5\%) for the  WKS test, $2\,764$ for GSEA, $2\,268$ for
 both.  There are no points in
 the bottom right corner of the graphics: when a p-value is very small
 for GSEA, it is never large for WKS. The converse is not true: many
 points in the upper left corner correspond to gene sets with a large
 p-value for GSEA, small for WKS.

More interesting is the analysis of
 liver-related gene sets. Out of $129$, $76$ were declared significant
 by the WKS test; $70$ by the GSEA test, $66$ by both. 
Therefore, $10$ gene sets were
declared significant by WKS only, and $4$ by GSEA only. 
Figure \ref{fig:pathways_different_for_two_tests} plots the cumulated
proportions of weights
$S_n(t)$ for those $14$ gene sets. On the same plot, the functions $t$
(bisector), to which the classical GSEA test compares $S_n(t)$, and
$G(t)$, used as a centering by WKS, also appear. On the
graphic, the reason why a gene set may be declared significant by one
test and not the other, is clear. The $4$ gene sets declared
significant by GSEA and not WKS, are represented by blue step
functions; they are above the $G$ curve. They are 
indeed far from the bisector, but not far enough from
$G$. Inside the corresponding gene sets, the weights of the
genes tend to be representative of the global distribution of weights,
and declaring them as significant by comparing to the bisector can
be regarded as a bias. Moreover, it should be observed that
$3$ out of the $4$ have size below $19$.
As already explained, when dealing with very small sizes, the WKS test tends
to underrate significance.

Conversely,
the 10 gene sets declared significant by WKS and not GSEA are
represented by red step functions.
They are relatively close to the bisector as expected, but clearly
below the $G$ curve, to which WKS compares. This means that in the
corresponding gene sets, the genes tend to have significantly smaller
weights, i.e. they are significantly underexpressed. An interesting
example is the gene set named
\verb+Acevedo_methylated_in_liver_cancer_dn+. As indicated by the two
letters \verb+dn+, it contains genes which are known to be
down-regulated in case of liver cancer (\cite{Acevedo08}). 
On Figure \ref{fig:pvalues_GSEA_wKS}, it appears
on the upper left corner: it has p-value close to $0$ for WKS, close to
$1$ for GSEA. Thus WKS has detected it as significantly related to the
tested vector, whereas GSEA has not. The case is not unique: 3 gene sets
had p-value larger than $0.5$ for GSEA, smaller than $10^{-3}$ for WKS.

\begin{figure}[!ht]
\centerline{
\includegraphics[width=10cm]{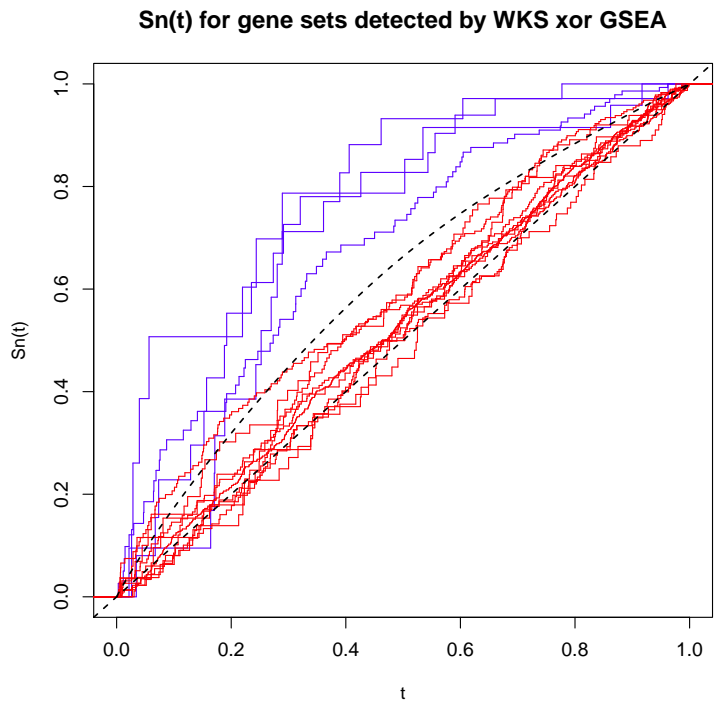}
} 
\caption{Plots of the cumulated weight function 
$S_n(t)$ for vectors declared
significant by WKS and not GSEA (red step functions) and
conversely (blue step functions). The functions $t$
(to which the classical GSEA test compares $S_n(t)$), and
$G(t)$ (used as a centering by WKS), are dashed.}
\label{fig:pathways_different_for_two_tests}
\end{figure}

As already stated, these results were consistently observed for
different expression vectors, from different types of cancers.
In all cases, WKS declared less significant pathways than GSEA 
in a proportion of about $10\%$ from the whole database, whereas it
tended to detect more significant gene sets among those related to the
correct type of cancer.
\section{Conclusion}
A new method for testing the relative enrichment of a gene set,
compared to a vector of numeric data over the whole genome, has been
proposed. Like   
the classical GSEA test of \cite{Subramanianetal05}, it is based on
cumulated proportions of weights, but a different centering is used. 
A convergence result that generalizes the
classical Kolmogorov Smirnov theorem, has been obtained. 
The corresponding testing
procedure extends the standard Kolmogorov Smirnov test and has been
called Weighted Kolmogorov Smirnov (WKS).
A major advantage of the WKS
test is that the calculation of p-values only depends on the
vector to be tested, and not on the gene set. Therefore, the same
distribution function can be used for calculating p-values over many
gene sets. A
Monte-Carlo evaluation has shown that the procedure is precise for
values of the gene set size larger than $40$. For a set of less
than $40$ genes, the WKS test is conservative, in the sense that
the p-value is increased, and therefore
the gene set is less likely to be declared significant. For
statistical coherence, the gene set size should not be
larger than $1\,000$. The WKS test has been compared with the classical
GSEA test over expression vectors of tumors coming from the GEO dataset
GSE36133 of \cite{Barretinaetal12}, tested against the MSig database
C2 (\cite{Subramanianetal05}). The comparison has shown that the results
of both tests are globally coherent. The WKS test tends to output less
significant gene sets out of the whole database, but more out of gene
sets specifically related to the same type of tumor. In particular,
the WKS test detects sets of underexpressed genes
which are not significant for GSEA. This encouraging
result needs to be consolidated, by using the WKS test over 
different types of vectors, and
more databases of gene sets.

Like the GSEA test, the WKS test can be used on any type of numeric
data. In particular, a transformation can be applied to the raw
expression levels before testing. In particular, the initial data can be
replaced by their ranks, in which case the test has low
computing cost, for a good precision. If, over the same database, 
the p-values of the initial vector, and the vector
of ranks are compared, a good agreement is
observed; yet less gene sets are declared significant against the
rank vector. Here we have considered only the two sided version of the
test: gene sets are declared significant when their cumulated
proportion of weights $S_n(t)$ is too far from the theoretical value
$G(t)$. Just like the KS test, the WKS can be made one-sided, by
testing the signed difference between $S_n(t)$ and $G(t)$: a gene set
for which $\inf (S_n(t)-G(t))$ is significantly negative, contains
genes whose weights tend to be small (down-regulated). Conversely,
gene sets for which $\sup (S_n(t)-G(t))$ is significantly positive,
contain more up-regulated genes.

Both the GSEA and the WKS tests have been implemented in a R
script. It is available online, together with data samples, and a
user manual, from the following address.\\
\centerline{\texttt{http://ljk.imag.fr/membres/Bernard.Ycart/publis/wks.tgz}}\\
We hope this will encourage further testing of the tool, and
validation in new biological studies.


\section*{Acknowledgements}
The authors acknowledge financial support from 
Laboratoire d'Excellence TOUCAN (Toulouse Cancer). They are indebted
to Alain Le Breton and Marina Kleptsyna for helpful remarks.
\end{bmcformat}
\end{document}